\documentclass{amsart}
\usepackage[english]{babel}
\usepackage{amsmath}
\usepackage{amsfonts}
\usepackage{amssymb}
\usepackage{graphicx}
\usepackage{mathrsfs}
\usepackage{url}
\usepackage{multirow}
\usepackage{subfigure}
\usepackage{booktabs}

\usepackage{color}
\usepackage{subfigure}
\usepackage{longtable}
\usepackage{float}

\makeatletter
\newcommand*{\rom}[1]{\expandafter\@slowromancap\romannumeral #1@}
\makeatother

\newcommand{\st}{\mathit{s.t.}}

\newtheorem{theorem}{Theorem}[section]

\newtheorem{corollary}[theorem]{Corollary}

\theoremstyle{definition}
\newtheorem{example}[theorem]{Example}

\newtheorem{algorithm}[theorem]{Algorithm}

\setcounter{equation}{0} \setcounter{subsection}{0}
\numberwithin{equation}{section}
\usepackage{algpseudocode}
\newcommand{\multiline}[1]{%
    \begin{tabularx}{\dimexpr\linewidth-\ALG@thistlm}[t]{@{}X@{}}
        #1
    \end{tabularx}
}

\newcommand{\Step}[1]{\algrenewcommand{\alglinenumber}[1]{Step ##1: } #1}

\title[Global optimization for the portfolio selection model]
{Global optimization for the portfolio selection model with high-order moments}

\author[Liu Yang]{Liu~Yang}
\address{Liu Yang, Hunan Key Laboratory for Computation and Simulation in Science and Engineering, Xiangtan University; 
School of Mathematics and Computational Sciences,
Xiangtan University, Xiangtan, Hunan, China, 411105.}
\email{yangl410@xtu.edu.cn}

\author[Yi Yang]{Yi~Yang}
\address{School of Mathematics, Hunan City University, Yiyang, Hunan, China, 413000.}
\email{1623316302@qq.com}

\author[Suhan Zhong]{Suhan~Zhong}
\address{Suhan Zhong, Department of Mathematics,
Texas A\&M University, College Station, TX, USA, 77843-3368.}
\email{suzhong@tamu.edu}

\begin{document}

\subjclass[2010]{90C31, 90C05, 90C30}

\date{}

\keywords{portfolio selection model, high-order moments, Moment-SOS relaxation, perturbation sample average approximation}

\begin{abstract}
In this paper, we study the global optimality of polynomial portfolio optimization (PPO). 
The PPO is a kind of portfolio selection model with high-order moments and
flexible risk preference parameters. 
We introduce a perturbation sample average approximation method, 
which can give a robust approximation of the PPO in form of linear conic optimization. 
The approximated problem can be solved globally with Moment-SOS
relaxations. 
We summarize a semidefinite algorithm, which can be used to find reliable approximations of the optimal value and optimizer set of the PPO.
Numerical examples are given to show the efficiency of the algorithm.
\end{abstract}

\maketitle
\section{Introduction}\label{sec1}
Let $x = (x_1,\cdots,x_n)^\top$ denote the investing proportion 
(the superscript $^\top$ denotes the transpose operator).
Let $\xi = (\xi_1,\cdots, \xi_n)^{\top}$ denote the random vector
of portfolio return rate, whose distribution is supported in 
$\Omega\subseteq\mathbb{R}^n$.
The portfolio selection model with high-order moments 
(in this paper, we can also call it polynomial portfolio optimization, 
or PPO for short)
is defined as
\begin{equation}
\label{eq:def:PPO}
\left\{\begin{array}{cl}
\min & f(x) := \mathbb{E}[F(x,\xi)]\\
\st & x\in X,
\end{array}
\right.
\end{equation}
where $\mathbb{E}$ is the expectation operator, 
$F(x,\xi)$ is a polynomial loss function and 
$X\subseteq\mathbb{R}^n$ is the set of all feasible investment proportions.
Suppose short selling is not allowed, then we select
\[ X \,=\, \{ x\in\mathbb{R}^n: x\geqslant 0,\, e^{\top} x = 1 \},\]
where $e := (1,\cdots,1)^{\top}\in\mathbb{R}^{n}$ is the vector 
of all ones.
When short selling is allowed, we choose 
$X = \{ x\in\mathbb{R}^n: e^{\top} x = 1\}$. 
In this paper, we assume short selling is not allowed unless 
given extra conditions.
Consider
\begin{equation}\label{eq:polyloss}
F(x,\xi) \,=\, -\lambda_1\cdot r(x,\xi) + \lambda_2\cdot r_2(x,\xi)
+ \cdots + (-1)^d \lambda_d\cdot r_d(x,\xi),
\end{equation}
where $\lambda = (\lambda_1, \cdots, \lambda_d)^{\top}$ is a vector of
risk preference parameters such that each $\lambda_i\geqslant 0$ and 
$e^{\top}\lambda = 1$.
The $r(x,\xi) := x^{\top} \xi$ is function of portfolio return and
\begin{equation}\label{eq:center_mom}
r_i(x,\xi) \,:=\, (r(x,\xi)-\mathbb{E}[r(x,\xi)])^i
\end{equation}
for $i = 1,\cdots, d$. In the above, each $\mathbb{E}[r_i(x,\xi)]$ 
is called the $i$th {\it central moment} of $r(x,\xi)$. 
Since the central moment $\mathbb{E}[r_1(x,\xi)]$ is identically zero, 
it is ignored in the expression (\ref{eq:polyloss}).
In particular, $\mathbb{E}[r_3(x,\xi)]$ is called {\it skewness} and 
$\mathbb{E}[r_4(x,\xi)]$ is called {\it kurtosis}.
There are various portfolio loss functions. 
We would like to remark that our method does not depend on certain
polynomial parametric expressions of $F(x,\xi)$.

The portfolio selection problem is important in financial literature.
The theory of Markowitz \cite{Markowitz1952} is a fundamental work 
in this area.
It quantifies the portfolio returns and risks by the mean and variance
values of some random vector.
So the Markowitz's model is also called the mean-variance (M-V) model.
The M-V model is a popular portfolio selection model. 
However, it can be very sensitive with input noises. 
Its performance is heavily dependent on the sampling quality.
In addition, real world portfolio returns usually exhibit thick tails 
and asymmetry \cite{JonRock14ConVol,Singleton1986}.
Investors are interested in taking accounts of higher order moments 
in the decision making process \cite{HarSid00ConSke}.
Recently, higher order portfolio selection models are proposed such as 
the mean-variance-skewness (M-V-S) model \cite{Konno1995} and 
the mean-variance-skewness-kurtosis (M-V-S-K) model \cite{DaviesKat2009}.
Interestingly, these models can be seen as special cases of PPO, in a more general sense.

Apparently, the PPO (\ref{eq:def:PPO}) is a stochastic polynomial optimization problem.
Suppose the explicit parametric expression of $f$ is known, 
then (\ref{eq:def:PPO}), as a polynomial optimization problem, 
can be solved globally by Moment-SOS relaxations \cite{Lasserre2001,Laurent09}.
However, it is difficult to compute $f$ accurately in practice.
This is because computing function expectation involves
 multi-dimensional integration.
A natural approach is to apply {\it sample average approximation} 
(SAA) methods.
That is, one can approximate $f$ by the sample average function
\[
F_N(x) \,=\, \frac{1}{N}\sum\limits_{i=1}^N F(x,\xi^{(i)}),
\]
where each $\xi^{(i)}$ is a random sample following the distribution 
of $\xi$.
But it usually requires a huge size of samples to get a good approximation.
In addition, the optimizer set may still not be well approximated 
if we replace $f$ by $F_N$ in (\ref{eq:def:PPO}), 
even if the coefficient vectors of $f, F_N$ are close.
Indeed, it can be computationally expensive to extract such an 
inaccurate optimizer, before updating $F_N$ with new samples.
To address these concerns, we consider using the 
{\it perturbation sample average approximation} (PSAA) method 
proposed in \cite{NieYang2020}.
The PSAA model adds a convex regularization term to the standard SAA model.
Compared to the standard SAA model, the PSAA method gives a more robust approximation of (\ref{eq:def:PPO}).
With a proper choice of regularization parameter, 
the PSAA model can be globally solved by Moment-SOS relaxation at 
the initial relaxation order.

There is a lot of research work on portfolio optimization. 
Discussions on M-V models are given in \cite{Markow14,RubinMarkow,Stein01MarRev}.
Studies on higher order portfolio selection models are given in \cite{LevyMark79Appr,MarPar09GloptPS}.
We also refer to \cite{BlackLitt92,GoldIyen03RPS,Kolm60years} 
and references therein for other kinds of portfolio selection models.
The portfolio selection problem is a special stochastic optimization problem.
The {\it stochastic approximation} (SA) and SAA are two major 
approaches of solving stochastic optimization.
We refer to \cite{Lan20book,ShaDenRu21book} for a comprehensive introduction to stochastic optimization.

\subsection*{Contribution.}
This article studies a new portfolio selection model with high-order moments, which is given as in (\ref{eq:def:PPO}).
For an order $d\geqslant 2$, the objective function is a polynomial expectation $\mathbb{E}[F(x,\xi)]$ with
\[
F(x,\xi) = x^{\top}\xi + \sum\limits_{i = 2}^d 
(-1)^i\lambda_i \Big(x^{\top}\xi-\mathbb{E}[x^{\top}\xi]\Big)^i
\]
for some nonnegative risk preference parameters $\lambda_1,\cdots,\lambda_d$ that sum up to one.
In applications, the order $d$ and parameters $\lambda_1,\cdots,\lambda_d$
are dependent on the historic data or sample amount and investors' personal risk preferences.
In most portfolio selection models in the literature, $d$ is often selected as $d = 3,4$. To show the efficiency of our method, 
we make a numerical experiment using real world stock data and 
compute with order $d = 5$ (see as in Example~\ref{exam3}).
We give a robust approximation of (\ref{eq:def:PPO}) with perturbation sample average approximations.
This approximation can be solved globally by Moment-SOS relaxations.
We show theoretically and numerically that the optimizer set of PSAA 
model gives a good approximation of the optimizer set of original PPO.
An efficient semidefinite algorithm is proposed based on our method.
We would like to remark that our method can give a convincing
approximation of the optimizer set without any convex assumptions
on the objective function.
As far as we know, there is less work discussing about global optimality
of nonconvex portfolio selection model, prior to our work.
To sum up, our main contributions are the followings.

\begin{itemize}
\item First, we introduce a general portfolio selection model with 
high-order moments, called PPO, which is given as in (\ref{eq:def:PPO}).
This model is flexible with arbitrary moment orders and different risk appetites.
\item Second, we give a robust approximation of PPO with perturbation
sample average approximations. The PSAA model is more computationally efficient compared to the standard SAA model.
For proper samples and parameters, the optimizer set of PSAA is close to
that of the original PPO.
\item Third, we propose a Moment-SOS algorithm to solve the PSAA model 
globally. Numerical experiments are given to show the efficiency of our algorithm.
\end{itemize}

The rest of the paper is organized as follows. 
In Section~\ref{sec:pre}, we review main results of polynomial optimization.  
In Section~\ref{sec:PolyPortOpt}, we introduce a PSAA model of the PPO and propose a semidefinite algorithm. 
Numerical experiments are presented in Section~\ref{sc:numerical}. 
Conclusions are given in Section~\ref{sec:con}.

\section{Preliminaries}\label{sec:pre}
\subsection*{Notation}
The symbol $\mathbb{N}$ denotes the set of nonnegative integers, 
and $\mathbb{R}$ denotes the set of real numbers.
For $t\in\mathbb{R}$, $\lceil t\rceil$ denotes the smallest integer 
that is bigger than or equal to $t$.
The $\mathbb{R}^n$ is the $n$-dimensional Euclidean space, and 
$\|\cdot\|$ stands for the Euclidean norm.
For a point $u$ and a scalar $R>0$, denote 
$B(u,R) := \{x: \|x-u\|\leqslant R\}$.
We use $e = (1,\cdots,1)^{\top}\in\mathbb{R}^n$ to denote the vector 
of all ones. The $e_i\in\mathbb{R}^n$ denotes the vector of all zeros
except the $i$th entry being one.
Let $x = (x_1,\cdots,x_n)^{\top}$.
Denote by $\mathbb{R}[x]$ the real polynomial ring in $x$, 
and denote by $\mathbb{R}[x]_d$ the set of polynomials with degrees no more than $d$.
For a polynomial $f$, we use $\deg(f)$ to denote its degree. 
For a tuple of polynomials $g = (g_1,\cdots, g_m)^{\top}$, 
we denote $\deg(g) := \max\{\deg(g_1),\cdots,\deg(g_m) \}$.
A symmetric matrix $A\in \mathbb{R}^{n\times n}$ is said to be positive
semidefinite or psd, if $x^{\top}Ax\ geqslant 0$ for all  $x\in \mathbb{R}^n$.
We use $A\succeq 0$ to denote a matrix $A$ is psd.
For two sets $A, B\subseteq\mathbb{R}^n$, denote the distance
\begin{equation}\label{eq:distance} 
\mathbb{D}(A, B) \,:=\, \sup_{u\in A}\inf_{v\in B}\|u-v\|. 
\end{equation}
For a given power vector 
$\alpha = (\alpha_1,\cdots,\alpha_n)^{\top}\in\mathbb{N}^n$, 
we denote the monomial
\[ x^{\alpha} := x_1^{\alpha_1}\cdots x_n^{\alpha_n}, \]
where the degree $\vert\alpha\vert := \alpha_1 + \cdots + \alpha_n$.
For a degree $d$, denote 
$\mathbb{N}_d^n := \{ \alpha\in \mathbb{N}^n: \vert\alpha\vert \leqslant d \}$.
The symbol $[x]_d$ denotes the monomial vector of $x$ with the highest degree $d$, i.e.,
\[ [x]_d \,:=\,
(1,\, x_1,\, \cdots,\, x_n,\, x_1^2,\,x_1x_2,\, \cdots,\, x_n^d)^{\top}. \]

\subsection{Polynomial optimization}\label{sc:PO}
A polynomial $ \sigma\in \mathbb{R}[x] $ is said to be sum-of-squares
(SOS) if it can be written as $\sigma=p_{1}^{2}+\cdots+p_{k}^{2}$ 
for some $p_1,\cdots,p_k\in\mathbb{R}[x]$.
We denote by $\Sigma[x]$ the set of all SOS polynomials and denote 
$\Sigma[x]_d:=\Sigma[x] \cap \mathbb{R}[x]_{d}$ for every degree $d\in \mathbb{N}$.

Let $g=(g_1,\cdots,g_m)$ be a tuple of real polynomials in $x$.
It determines a semi-algebraic set
\begin{equation}\label{eq:K:semialg}
K \,=\, \{ x\in\mathbb{R}^n: g(x)\geqslant 0 \}.
\end{equation}
Denote the cone of nonnegative polynomials on $K$ by
\[ \mathscr{P}(K) \,:=\, \{ p\in \mathbb{R}[x]: 
p(x)\geqslant 0\,(\forall x\in K) \}. \]
For a degree $d$, denote $\mathscr{P}_d(K):=\mathscr{P}(K)\cap\mathbb{R}[x]_d$. The quadratic module of $g$ is
\[ \mbox{QM}(g) \,=\, \Sigma[x]+g_{1} \cdot \Sigma[x]+\cdots+g_{m} \cdot \Sigma[x]. \]
For $k\geqslant \lceil \deg(g)/2\rceil$, we denote the $k$th order truncation
\begin{equation}\label{eq:QMtrun}
\mbox{QM}(g)_k \,:=\, \Sigma[x]_{2 k}+g_{1} \cdot \Sigma[x]_{2k-\deg(g_1)}+\cdots+g_{m} \cdot \Sigma[x]_{2k-\deg(g_m)}.
\end{equation}
The $\mbox{QM}(g)$ is a convex cone, and each $\mbox{QM}(g)_k$ is a 
convex cone in $\mathbb{R}[x]_{2k}$.
For every $k$, the containment relations hold that
\[ \mbox{QM}(g)_k\subseteq \mathscr{P}_{2k}(K),\quad 
\text{and}\quad 
\mbox{QM}(g)_k\subseteq \mbox{QM}(g)_{k+1}\subseteq \mbox{QM}(g). \]
The quadratic module $\mbox{QM}(g)$ is said to be {\it archimedean} 
if there is $p\in \mbox{QM}(g)$ such that $p\geqslant 0$ determines a compact set.
Suppose $\mbox{QM}(g)$ is archimedean, then $K$ must be compact.
Conversely, it may not be true. 
However, given $K$ is compact (i.e., $K \subseteq B(0, R)$ for some radius $R$), 
one can always set $\bar{g} := (g, R^2-\|x\|^2)$ such that $\bar{g}\geqslant 0$ 
also determines $K$ and $\mbox{QM}(\bar{g})$ is archimedean.
It is clear that every $p\in \mbox{QM}(g)$ is nonnegative on $K$.
If a polynomial $q>0$ over $K$, then $q\in \mbox{QM}(g)$ under the archimedean condition. 
This conclusion is often referenced as Putinar's Postivstellensatz \cite{Putinar93}.
Interestingly, under some optimality conditions, 
if $f\geqslant 0$ on $K$, we also have $f\in \mbox{QM}(g)$.
This result is shown in \cite{Nie14fincov}.

Consider a polynomial optimization problem
\begin{equation}
\label{eq:genpo}
\left\{
\begin{array}{cl}
\min\limits_{x\in\mathbb{R}^n} & p(x)\\
\st & g_1(x)\geqslant 0,\cdots, g_m(x)\geqslant 0,
\end{array}
\right.
\end{equation}
where $p$ and each $g_i$ are polynomials.
Denote $g = (g_1,\cdots, g_m)$ and let $K$ be given as in (\ref{eq:K:semialg}).
Suppose $\gamma^*$ is the global optimal value of (\ref{eq:genpo}).
Then $\gamma\leqslant \gamma^*$ if and only if $p(x)-\gamma\geqslant 0$ for every $x\in K$, i.e., $p(x)-\gamma\in\mathscr{P}(K)$.
This implies a hierarchy of SOS relaxations of (\ref{eq:genpo}).
For a degree $k\geqslant \max\{ \lceil \deg(p)/2\rceil ,\lceil \deg(g)/2\rceil\}$, the $k$th order SOS relaxation of (\ref{eq:genpo}) is
\[
\left\{
\begin{array}{rl}
\gamma_k:= \max\limits_{\gamma\in\mathbb{R}} & \gamma\\
\st & p(x) - \gamma\in\mbox{QM}(g)_k.
\end{array}
\right.
\]
Its dual problem is the $k$th order moment relaxation.
Under some archimedean conditions, it is shown in \cite{Lasserre2001} 
that $\gamma_k\rightarrow \gamma^*$ as $k\rightarrow\infty$.
In particular, the finite convergence is studied in \cite{Nie13CerConv,Nie14fincov}.
The polynomial optimization has broad applications, see \cite{FanNieZhou18,GNY21,NieTang21CGNEP,NYZZ21DRO}.
We refer to \cite{HNY2021,QuTang22,NieYang2021} for recent results of polynomial optimization.
For a detailed review for polynomial optimization, 
we refer to monographs \cite{NieBook23,HenKorLas20,Las15book} and references therein.

\subsection{Localizing and moment matrices}
Denote by $\mathbb{R}^{\mathbb{N}_{d}^{n}}$ the real vector space 
that consists of truncated multi-sequences (tms) 
$y=\left(y_{\alpha}\right)_{\alpha \in \mathbb{N}_{d}^{n}}$ of degree $d$.
Each tms $y$ defines a Riesz functional $\mathscr{R}_{y} $ on $ \mathbb{R}[x]_{d} $ by
\[ \mathscr{R}_{y} (f)
\,:=\, \sum_{\alpha \in \mathbb{N}_{d}^{n}} f_{\alpha} y_{\alpha}
\quad \mbox{if}\quad f(x) = \sum_{\alpha \in \mathbb{N}_{d}^{n}} f_{\alpha} x^{\alpha}. \]
For convenience, we denote that
\begin{equation}\label{eq:<f,y>}
\langle f, y\rangle \,:=\, \mathscr{R}_{y}(f).
\end{equation}
We say a tms $y\in\mathbb{R}^{\mathbb{N}_d^n}$ admits a measure 
$\mu$ supported in $K$ if it satisfies $y_{\alpha} = \int x^{\alpha}\mathtt{d}\mu$ for all $\alpha\in\mathbb{N}_d^n$.
A special case is that $y$ admits the Dirac measure at the point 
$u = (u_1,\cdots, u_n)^{\top}$.
Then for every power vector $\alpha = (\alpha_1,\cdots, \alpha_n)^T$, we have
\[ y_{\alpha} = u_1^{\alpha_1}\cdots u_n^{\alpha_n}. \]
In particular, it holds that
\[ u \,= \, (y_{e_1},\, \cdots,\, y_{e_n})^{\top}. \]
The problem for checking whether or not a tms admits a measure is called the {\it truncated moment problem}. 
We refer to \cite{CurFial05KTMP,HelNie12KTMP,Nie14ATMP} for more details on this topic.

Let $q\in\mathbb{R}[x]$ and  $t_0 = \lceil \deg(q)/2\rceil$. 
For $k\geqslant t_0$ and a tms $y\in\mathbb{R}^{\mathbb{N}_{2k}^n}$, 
there exists a symmetric matrix $L_q^{(k)}[y]$ such that
\begin{equation}\label{eq:locmat}
vec(a)^{T}(L_{q}^{(k)}[y])vec(b) \,=\,
\mathscr{R}_{y}(q a b),\quad \forall a,b\in\mathbb{R}[x]_{k-t_0},
\end{equation}
where $vec(a), vec(b)$ denote the coefficient vectors of $a,b$ respectively.
The $L_q^{(k)}[y]$ is called the $k$th order {\it localizing matrix} of $q$. 
For example, when $n=3, k=2$ and $q=x_1^2+2x_2x_3$, we have
\[
L_q^{(2)}[y] = \begin{bmatrix}
y_{200}+2y_{011} & y_{300}+2y_{111} & y_{210}+2y_{021} & y_{201}+2y_{012}\\
y_{300}+2y_{111} & y_{400}+2y_{211} & y_{310}+2y_{121} & y_{301}+2y_{112}\\
y_{210}+2y_{021} & y_{310}+2y_{121} & y_{220}+2y_{031} & y_{211}+2y_{022}\\
y_{201}+2y_{012} & y_{301}+2y_{112} & y_{211}+2y_{022} & y_{202}+2y_{013}
\end{bmatrix}.
\]
For the special case that $q=1$, we define the $k$th order moment matrix by
\begin{equation}
	\label{eq:mommat}
	M_{k}[y] \,:=\, L_{1}^{(k)}[y].
\end{equation}
For example, when $n=2, k=1$, we have
\[
M_1[y] = \begin{bmatrix}
y_{00} & y_{10} & y_{01}\\
y_{10} & y_{20} & y_{11}\\
y_{01} & y_{11} & y_{02}
\end{bmatrix}.
\]
The moment and localizing matrices are useful in polynomial optimization. 
For a tuple of polynomials $g = (g_1,\cdots,g_m)$ and a degree $k\geqslant \lceil \deg(g)/2\rceil$, define
\begin{equation} \label{mom:S(g):2d}
	\mathscr{S}(g)_{k} :=
	\left\{
	y \in \mathbb{R}^{ \mathbb{N}^n_{2k} } :
	M_k[y] \succeq 0, \,  L_{g_1}^{(k)}[y] \succeq 0, \cdots,
	L_{g_m}^{(k)}[y] \succeq 0
	\right\}.
\end{equation}
The $\mathscr{S}(g)_k$ is a convex cone and it is the dual cone of $\mbox{QM}(g)_k$, i.e.,
\[
(\mbox{QM}(g)_k)^* = \mathscr{S}(g)_k,
\]
where the superscript $^*$ denotes the dual cone.

\section{Global portfolio optimization}
\label{sec:PolyPortOpt}

For a portfolio consisting of $n$ assets, let $\xi\in\Omega\subseteq\mathbb{R}^n$ be the random rate of portfolio return and $x\in\mathbb{R}^n$ be the investing proportion.
Assume short selling is not allowed, then the PPO model is
\begin{equation}
	\label{eq:PolyPS}
	\left\{
	\begin{array}{cl}
		\min\limits_{x\in \mathbb{R}^n} & f(x)= \mathbb{E}[F(x,\xi)] \\
		\st & x\geqslant 0,\, e^{\top}x=1,
	\end{array}
	\right.
\end{equation}
where $F(x,\xi)$ is given as in (\ref{eq:polyloss}).
Note the constraint $x\geqslant 0$ can be ignored if short selling is allowed.
For both cases, the constraining equation $e^{\top}x=1$ can be used to eliminate one variable, i.e.,
\[
e^{\top}x = 1\quad \Leftrightarrow\quad x_n = 1-(x_1+\cdots+x_{n-1}).
\]
Let $\bar{x} := (x_1,\cdots,x_{n-1})^{\top}$ and $\bar{e} := (1,\cdots,1)^{\top}\in\mathbb{R}^{n-1}$.
We define
\begin{equation}
	\label{eq:f1}
	f_0(\bar{x}) \,:=\, f(\bar{x}, 1-\bar{e}^{\top}\bar{x}) 
	\,=\, \mathbb{E}[F(\bar{x},1-\bar{e}^{top}\bar{x}, \xi)].
\end{equation}
It is clear that the PPO (\ref{eq:PolyPS}) is equivalent to
\begin{equation}\label{eq:eliPolyPS}
\left\{
\begin{array}{rl}
\theta := \min\limits_{\bar{x}\in\mathbb{R}^{n-1}} & f_0(\bar{x})\\
\st\,\,\, & \bar{x}\geqslant 0,\, 1-\bar{e}^{\top}\bar{x}\geqslant 0.
\end{array}
\right.
\end{equation}
Similarly, when the short selling is allowed, the PPO is equivalent 
to the unconstrained optimization
\begin{equation}\label{eq:eliPSuc}
\min\, f_0(\bar{x})\quad \st \quad \bar{x}\in\mathbb{R}^{n-1}.
\end{equation}
The polynomial $f$ is bounded from below on the feasible set of (\ref{eq:PolyPS}), which is compact.
A feasible point $u = (u_1,\cdots, u_{n-1})^{\top}$ is a global minimizer of (\ref{eq:eliPolyPS}) 
if and only if $x^* = (u_1,\cdots, u_{n-1}, 1-\bar{e}^{\top}u)^{\top}$ is the global minimizer of (\ref{eq:PolyPS}).
Suppose (\ref{eq:eliPSuc}) is solvable with an optimizer, 
then a similar conclusion holds for the PPO with short selling allowed.
Both (\ref{eq:eliPolyPS})-(\ref{eq:eliPSuc}) have fewer decision variables than the original PPO problems.
So solving (\ref{eq:eliPolyPS})-(\ref{eq:eliPSuc}) saves computational expenses.
In the following discussions, we focus on these reformulations (\ref{eq:eliPolyPS})-(\ref{eq:eliPSuc}).

\subsection{PSAA model}
In applications, the distribution of $\xi$ is typically unknown.
But it is natural to assume the truncated moment information of $\xi$ 
can be well approximated by samplings and historic data.
So we consider applying SAA methods to approximate $f_0(\bar{x})$ as in (\ref{eq:f1}).

Suppose $\xi^{(1)},\cdots, \xi^{(N)}$ are given samples that each identically follows the distribution of $\xi$.
Define the sample average function
\begin{equation}\label{eq:dehSAAf}
f_N(\bar{x}) \,:=\, \frac{1}{N}\sum\limits_{i=1}^N 
F(\bar{x},1-\bar{e}^{\top}\bar{x},\xi^{(i)}).
\end{equation}
The construction of $f_N(\bar{x})$ guarantees that $f_0(\bar{x}) = \mathbb{E}[f_N(\bar{x})]$.
If we further assume that every $\xi^{(i)}$ is independently identically distributed, then under some regularity conditions, we have
\[
f_N(\bar{x})\rightarrow f_0(\bar{x})\quad 
\mbox{as}\quad N\rightarrow\infty
\]
with probability one.
This result is implied from the Law of Large Numbers \cite{rossbook}.
Then we can approximate (\ref{eq:eliPolyPS}) by the following optimization
\begin{equation}\label{eq:SAApoly}
\left\{\begin{array}{rl}
\theta_N := \min\limits_{\bar{x}\in\mathbb{R}^{n-1}} & f_N(\bar{x}) \\
\st\,\,\,\, & \bar{x}\geqslant 0,\, 1-\bar{e}^{\top}\bar{x}\geqslant 0.
\end{array}
\right.
\end{equation}
The above approximation is usually given from classic SAA methods.
It is a deterministic polynomial optimization, 
which can be solved globally by Moment-SOS relaxations, see in Subsection~\ref{sc:PO}.
It has good statistical properties. 
By \cite[Theorem~5.2]{ShaDenRu21book} and \cite[Theorem~5.3]{ShaDenRu21book}, 
we have the following convergent result.
\begin{theorem}\label{theo:SAA}
Suppose $\theta, \theta_N$ are the optimal values, and $T, T_N$ are the
nonempty optimizer sets of (\ref{eq:eliPolyPS}) and (\ref{eq:SAApoly}) respectively.
Assume the distribution of $\xi$ has a compact support $\Omega\subseteq \mathbb{R}^n$, 
and $\{\xi^{(i)}\}_{i=1}^N$ are independent identically distributed samples of $\xi$.
Then $\theta_N\rightarrow \theta $ and the distance $\mathbb{D}(T, T_N)\rightarrow 0$ with probability one, as $N\rightarrow\infty$.
\end{theorem}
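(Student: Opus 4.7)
The plan is to verify the hypotheses of the classical sample average approximation consistency theorems (cited as \cite[Theorems 5.2--5.3]{Shapiro2021}) for the particular reformulation \eqref{eq:eliPolyPS}--\eqref{eq:SAApoly}. The three ingredients needed are: (i) the feasible set $\bar{X}:=\{\bar{x}\in\mathbb{R}^{n-1}:\bar{x}\ge 0,\,1-e^T\bar{x}\ge 0\}$ is nonempty and compact; (ii) the integrand $(\bar{x},\xi)\mapsto F(\bar{x},1-e^T\bar{x},\xi)$ is jointly continuous on $\bar{X}\times\Omega$ and dominated by an integrable function uniformly in $\bar{x}$; and (iii) the iid samples $\{\xi^{(i)}\}$ give rise to a uniform strong law of large numbers on $\bar{X}$. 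From these, both convergence of optimal values and Hausdorff-type convergence of optimizer sets follow.

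First I would record that $\bar{X}$ is the unit simplex (without the $n$th coordinate), hence convex and compact, and that $f_0$ is a polynomial in $\bar{x}$, so continuous on $\bar{X}$; in particular the optimizer set $T$ is nonempty. Since $F(\bar{x},1-e^T\bar{x},\xi)$ is a polynomial in $(\bar{x},\xi)$ and $\Omega$ is compact, the function $H(\bar{x},\xi):=F(\bar{x},1-e^T\bar{x},\xi)$ is bounded on $\bar{X}\times\Omega$. Thus $|H(\bar{x},\xi)|\le C$ for some constant $C>0$, giving the required integrable dominating function for the parameter-dependent strong law of large numbers.

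Second, I would invoke the uniform strong law of large numbers (e.g.\ \cite[Theorem 7.48]{Shapiro2021} or its equivalent): since $H$ is continuous in $\bar{x}$ for almost every $\xi$, is dominated as above, and $\bar{X}$ is compact, we obtain
\begin{equation}
\sup_{\bar{x}\in\bar{X}}\,|f_N(\bar{x})-f_0(\bar{x})|\longrightarrow 0\quad\text{w.p.\,1 as }N\to\infty.
\end{equation}
Uniform convergence of the objectives on a common compact feasible set is exactly the hypothesis of the epi-convergence/consistency argument in \cite[Theorem 5.3]{Shapiro2021}, from which $\theta_N\to\theta$ with probability one.

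Finally, to deduce $\mathbb{D}(T,T_N)\to 0$ with probability one, I would use the standard contradiction argument: if it failed on a positive-probability event, there would exist $\varepsilon>0$, a subsequence $N_k\uparrow\infty$, and points $\bar{x}^{(k)}\in T_{N_k}$ whose distance to $T$ exceeds $\varepsilon$. By compactness of $\bar{X}$ we may assume $\bar{x}^{(k)}\to\bar{x}^*\in\bar{X}$, and by the uniform convergence above together with $\theta_{N_k}\to\theta$, we get $f_0(\bar{x}^*)=\theta$, so $\bar{x}^*\in T$, contradicting the distance lower bound. The only subtle point—and the part I expect to require the most care—is that the exceptional null sets appearing in the uniform LLN and in the subsequential argument must be handled so that a single null set works simultaneously for all $N$ and for the limiting subsequence; this is the standard reason the conclusion is stated ``with probability one'' rather than ``almost surely for each $N$,'' and it is exactly what the cited theorems of Shapiro package up.
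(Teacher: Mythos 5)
Your proposal is correct and follows essentially the same route as the paper: both proofs verify the hypotheses of \cite[Theorems 5.2 and 5.3]{Shapiro2021} by noting that the feasible set is compact, that $f_0$ and $f_N$ are polynomials (hence continuous), and that the compact support $\Omega$ together with the iid sampling gives the required integrability, then cite those theorems for both conclusions. The only difference is one of detail, not of substance: you additionally unpack the uniform strong law of large numbers and the subsequence/contradiction argument for $\mathbb{D}(T,T_N)\to 0$, machinery which the paper leaves entirely inside the citation.
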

\begin{proof}
Let $\Delta$ denote the feasible set of (\ref{eq:eliPolyPS}) and (\ref{eq:SAApoly}). Clearly, $\Delta$ is compact.
The $f_0(\bar{x})$ and each $f_N(\bar{x})$ are polynomials in $\bar{x}$, which are continuous in $\Delta$.
So both (\ref{eq:eliPolyPS}) and (\ref{eq:SAApoly}) are solvable with nonempty optimizer sets $T,T_N\subseteq \Delta$.
Assume the distribution of $\xi$ is supported in a compact set $\Omega$.
For every fixed $u\in \Delta$, the function $F(u,1-\bar{e}^{\top}u,\xi)$ as a polynomial in $\xi$, is continuous and integrable over $\Omega$.
Therefore, the assumptions of \cite[Theorem 5.2]{ShaDenRu21book} and \cite[Theorem 5.3]{ShaDenRu21book} are all satisfied.
The conclusions can be directly implied by these two theorems.
\end{proof}
The conclusions of Theorem~\ref{theo:SAA} depend on the compactness 
of the feasible set.
Suppose short selling is allowed, the PPO is equivalent to an unconstrained optimization, see as in (\ref{eq:eliPSuc}).
In this case, it can be hard to ensure the global optimizer set of the SAA model is close to that of the original PPO.
Indeed, it is not necessary to solve each SAA model very accurately, even for cases that short selling is not allowed.
Because the performance of SAA model depends on the sampling quality and quantity, 
and to extract the global minimizer of (\ref{eq:SAApoly}) may require a high relaxation order.
One can imagine that the computational expenses becomes very high if we need to solve the SAA model (\ref{eq:SAApoly}) every time when a new sample is generated.
On the other hand, investors are interested in the best investing proportions, equivalently, the optimizer sets of PPO reformulations (\ref{eq:eliPolyPS})-(\ref{eq:eliPSuc}).
So our goal is to get convincing approximations of global optimizers of PPO problems in an efficient way.

To achieve the goal, we consider using the perturbation sample average approximation method introduced in \cite{NieYang2020}.
It adds a convex regularization term to the standard SAA model, 
which has some good computational properties. Denote
\begin{equation}	
d_0 \,:=\, \lceil \deg(f_N)/2\rceil.
\end{equation}
The PSAA model of (\ref{eq:eliPolyPS}) is
\begin{equation}\label{eq:eliPolyPSAA}
\left\{\begin{array}{cl}
\min\limits_{\bar{x}\in\mathbb{R}^{n-1}} & f_N(\bar{x})+\varepsilon \|[\bar{x}]_{2d_0}\| \\
\st & \bar{x}\geqslant 0,\, 1-\bar{e}^{\top}\bar{x}\geqslant 0.
\end{array}\right.
\end{equation}
In the above, $f_N$ is given as in (\ref{eq:dehSAAf}), $\varepsilon>0$ is a small parameter, and $[\bar{x}]_{2d_0}$ is the monomial vector of $\bar{x}$ with the highest degree $2d$, i.e.,
\[
[\bar{x}]_{2d_0} \,=\, (1,\, x_1,\, \cdots,\, x_{n-1},\, x_1^2,\, x_1x_2,\,\cdots,\, x_{n-1}^{2d_0})^{\top}.
\]
Similarly, the PSAA model for (\ref{eq:eliPSuc}) is
\begin{equation}\label{eq:eliPolyPSAA1}
\min\, f_N(\bar{x})+\varepsilon \|[\bar{x}]_{2d_0}\|\quad \st\quad \bar{x}\in \mathbb{R}^{n-1}.
\end{equation}
It is worth noting that when $\varepsilon = 0$, 
(\ref{eq:eliPolyPSAA})--(\ref{eq:eliPolyPSAA1}) are reduced to be standard SAA models.
Suppose these standard SAA models are solvable with nonempty global minimizer sets, say $T_N^{(1)}, T_N^{(2)}$.
Then for each $\varepsilon>0$, the PSAA models (\ref{eq:eliPolyPSAA})--(\ref{eq:eliPolyPSAA1}) must also be solvable with optimizer sets, 
say $\hat{T}_N^{(1)}, \hat{T}_N^{(2)}$ respectively.
In addition, $\mathbb{D}(T_N^{(1)}, \hat{T}_N^{(1)})$ and  $\mathbb{D}(T_N^{(2)}, \hat{T}_N^{(2)})$ are both close to zero when $\varepsilon$ is sufficiently small.
It implies that solving PSAA models can also return reliable approximations for the original solution set of PPO, 
if $T_N^{(i)}, i=1,2$ are good approximations.

We would like to remark that PSAA models (\ref{eq:eliPolyPSAA})--(\ref{eq:eliPolyPSAA1}) are more robust than SAA models from the aspect
of computational efficiency.
The optimization problems (\ref{eq:eliPolyPSAA})--(\ref{eq:eliPolyPSAA1}) 
can be solved globally by Moment-SOS relaxations.
For the special case that $\varepsilon = 0$, the relaxation order may be high to return a global minimizer.
But for a proper choice of $\varepsilon>0$, (\ref{eq:eliPolyPSAA})--(\ref{eq:eliPolyPSAA1}) can be solved globally 
with a unique minimizer, 
at the initial relaxation order.
More details on this are given in the following subsection.

\subsection{Moment-SOS relaxations}\label{ssc:msr}
In this section, we study Moment-SOS relaxations for PSAA models.
Denote the tuple of constraining polynomials
\begin{equation}\label{eq:g}
g(\bar{x}) \,:=\,  (x_1,\, \cdots,\,  x_{n-1},\,  1-\bar{e}^{\top}\bar{x}).
\end{equation}
Suppose a tms $y\in \mathbb{R}^{\mathbb{N}_{2d_0}^{n-1}}$ 
satisfies $y=[\bar{x}]_{2d_0}$ for some point $\bar{x}\in\mathbb{R}^{n-1}$, then
\[
f_N(\bar{x})+\varepsilon \|\bar{x}\|_{2d_0} \,=\, 
\langle f_N,y\rangle+\varepsilon\|y\|,
\]
where $\langle,\rangle$ is given as in (\ref{eq:<f,y>}).
If $\bar{x}$ is a feasible point for (\ref{eq:eliPolyPSAA}), making $g(\bar{x})\geqslant 0$, then we have
\[ p(\bar{x}) = \langle p,y\rangle\geqslant 0,\quad
\forall p\in \mbox{QM}(g)_{d_0}. \]
This follows from the defining equation of $\mbox{QM}(g)_{d_0}$ as in (\ref{eq:QMtrun}). It implies that
\[ y \,\in\, \mathscr{S}(g)_{d_0} \,=\, (\mbox{QM}(g)_{d_0})^*, \]
where $\mathscr{S}(g)_{d_0}$ is the tms cone given as in (\ref{mom:S(g):2d}).
In other words, we have
\[ M_{d_0}[y] \succeq 0,\quad L_{g_i}^{(d_0)}[y]\succeq 0, \]
for $i = 1,\ldots,n$. The $M_{d_0}[y]$ and each $L_{g_i}^{(d_0)}[y]$ are the $d_0$th order moment and localizing matrices of $y$ and $g_i$.
It leads to the moment relaxation of (\ref{eq:eliPolyPSAA}):
\begin{equation}\label{eq:relax_pro_PSAA}
\left\{
\begin{array}{cl}
\min &\langle f_N, y\rangle+\varepsilon \| y\|  \\
\st &  y_{0}=1,\,M_{d_0}[y]\succeq 0,\\
& L_{g_i}^{(d_0)}[y]\succeq 0,\, i=1,\cdots,n,\\
& y\in \mathbb{R}^{\mathbb{N}_{2d_0}^{n-1}}.
\end{array}\right.
\end{equation}
Its dual problem is the SOS relaxation
\begin{equation}\label{eq:dual_relax_pro}
\left\{
\begin{array}{cl}
\max & \gamma \\
\st &  f_N(\bar{x})-q(\bar{x})-\gamma \in \mbox{QM}(g)_{d_0},\\
& \|vec(q)\|\leqslant \varepsilon,\\
& q\in \mathbb{R}[\bar{x}]_{2d_0},\, \gamma\in\mathbb{R},
\end{array}\right.
\end{equation}
where $vec(q)$ denotes the coefficient vector of the polynomial $q$.
The conic constraint $f_N(\bar{x})-q(\bar{x})-\gamma\in \mbox{QM}(g)_{d_0}$ means that 
$f_N(\bar{x})-q(\bar{x})-\gamma$, as a polynomial in $\bar{x}$, belongs to the truncated quadratic module $\mbox{QM}(g)_{d_0}$.
Similarly, we can get the moment relaxation of (\ref{eq:eliPolyPSAA1}):
\begin{equation}\label{eq:relax_pr_PSAA_uc}
\left\{\begin{array}{cl}
\min & \langle f_N, y\rangle+\varepsilon\|y\|\\
\st & y_0 = 1,\, M_{d_0}[y]\succeq 0,\\
& y\in \mathbb{R}^{\mathbb{N}_{2d_0}^{n-1}}.
\end{array}\right.
\end{equation}
The SOS relaxation of (\ref{eq:eliPolyPSAA1}) is
\begin{equation}\label{eq:relax_du_PSAA_uc}
\left\{\begin{array}{cl}
\max & \gamma \\
\st &  f_N(\bar{x})-q(\bar{x})-\gamma \in \Sigma[\bar{x}]_{2d_0},\\
& \|vec(q)\|\leqslant \varepsilon,\\
& q\in \mathbb{R}[\bar{x}]_{2d_0},\, \gamma\in\mathbb{R}.
\end{array}\right.
\end{equation}
The optimization problems (\ref{eq:relax_pro_PSAA})--(\ref{eq:relax_du_PSAA_uc}) 
are linear conic optimization problem with cones given by linear, second-order and semidefinite constraints.

For an optimization problem, we say its relaxation is {\it tight} if 
the relaxation has the same optimal value of the original problem.
Then we give a sufficient and necessary condition such that 
(\ref{eq:relax_pro_PSAA}) is a tight relaxation of (\ref{eq:eliPolyPSAA}).
\begin{theorem}\label{theo:PSAA}
For a given $\varepsilon>0$, suppose (\ref{eq:relax_pro_PSAA}) is solvable with a global minimizer $y^*$. 
Then (\ref{eq:relax_pro_PSAA}) is a tight relaxation of (\ref{eq:eliPolyPSAA}) if and only if 
$\mbox{rank}\, M_{d_0} [y^{*}] = 1$. 
Moreover, suppose $\mbox{rank}~ M_{d_0} [y^{*}]=1$, then the 
$u = (y^*_{e_1}, \cdots, y^*_{e_{n-1}})^{\top}$ is a global minimizer of (\ref{eq:eliPolyPSAA}).
\end{theorem}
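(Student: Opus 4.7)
\vspace{0.5em}
\noindent\emph{Proof proposal.}
The plan is to reduce both directions to a single identification: extracting an atom $u$ from $y^*$ when $M_{d_0}[y^*]$ is rank one, and conversely reconstructing $y^*$ from an optimizer of (\ref{eq:eliPolyPSAA}). First I would record the standing inequality that (\ref{eq:relax_pro_PSAA}) lower bounds (\ref{eq:eliPolyPSAA}): for any feasible $\bar{x}$ of (\ref{eq:eliPolyPSAA}), the tms $y=[\bar{x}]_{2d_0}$ is feasible for (\ref{eq:relax_pro_PSAA}), since $M_{d_0}[y]=[\bar{x}]_{d_0}[\bar{x}]_{d_0}^T\succeq 0$ and $L_{g_i}^{(d_0)}[y]=g_i(\bar{x})[\bar{x}]_{d_0-1}[\bar{x}]_{d_0-1}^T\succeq 0$, and it has objective $f_N(\bar{x})+\epsilon\|[\bar{x}]_{2d_0}\|$. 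Writing $\theta$ for the optimal value of (\ref{eq:eliPolyPSAA}) and $\theta_r := \langle f_N,y^*\rangle+\epsilon\|y^*\|$, this yields $\theta_r\le\theta$.

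For the ($\Leftarrow$) direction, suppose $\mbox{rank}\,M_{d_0}[y^*]=1$. Together with $y^*_0=1$, this forces $M_{d_0}[y^*]=[u]_{d_0}[u]_{d_0}^T$ for some $u\in\mathbb{R}^{n-1}$, and reading off the entries of this moment matrix recovers $y^*=[u]_{2d_0}$ together with $u=(y^*_{e_1},\ldots,y^*_{e_{n-1}})$. Each localizing condition then reads $g_i(u)[u]_{d_0-1}[u]_{d_0-1}^T\succeq 0$; since $[u]_{d_0-1}$ is nonzero (it has a leading $1$), this forces $g_i(u)\ge 0$, so $u$ is feasible for (\ref{eq:eliPolyPSAA}). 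The relaxation value at $y^*$ equals $f_N(u)+\epsilon\|[u]_{2d_0}\|\ge\theta$; combined with $\theta_r\le\theta$ this gives $\theta_r=\theta$ and exhibits $u$ as a PSAA minimizer, settling the ``moreover'' clause.

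For the ($\Rightarrow$) direction, I would use strict convexity of $y\mapsto\|y\|$ on the affine slice $\{y:y_0=1\}$ to show (\ref{eq:relax_pro_PSAA}) has at most one optimizer. Indeed, if $y^1\neq y^2$ were both optimal, the normalization $y^1_0=y^2_0=1$ rules out $y^2$ being a positive multiple of $y^1$, so the strict triangle inequality gives $\|\tfrac{1}{2}(y^1+y^2)\|<\tfrac{1}{2}(\|y^1\|+\|y^2\|)$; combined with linearity of $\langle f_N,\cdot\rangle$ and convexity of the feasible set, the midpoint strictly beats the common optimal value, a contradiction. Now if the relaxation is tight, an optimizer $u^*$ of (\ref{eq:eliPolyPSAA}) exists by compactness and continuity, and by the opening paragraph $[u^*]_{2d_0}$ attains the relaxation value, hence is optimal. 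Uniqueness then forces $y^*=[u^*]_{2d_0}$, which is rank one.

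The main obstacle I expect is the uniqueness step in the ($\Rightarrow$) direction: without the perturbation ($\epsilon=0$) the relaxation objective becomes purely linear and higher-rank optimizers can coexist with rank-one ones, so the ``only if'' can fail; it is precisely $\epsilon>0$ that makes the objective strictly convex on $\{y:y_0=1\}$ and powers the argument. Once uniqueness is secured, the rank-one factorization already carried out in the ($\Leftarrow$) step supplies both the identification of $u$ and the ``moreover'' conclusion.
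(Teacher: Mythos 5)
Your proposal is correct and follows essentially the same route as the paper's proof: lift feasible points $\bar{x}$ to feasible tms's $[\bar{x}]_{2d_0}$ for the lower bound, extract the atom $u$ from the rank-one moment matrix (with the $(1,1)$-entry of $L_{g_i}^{(d_0)}[y^*]$ giving feasibility) for the ``if'' direction and the moreover clause, and use uniqueness of the relaxation optimizer for the ``only if'' direction. Your explicit strict-triangle-inequality argument on the slice $\{y: y_0=1\}$ is in fact a slightly more careful justification of the paper's bare assertion that the objective $\langle f_N, y\rangle+\epsilon\|y\|$ is strictly convex (strict convexity fails along rays through the origin, but the normalization $y_0=1$ excludes exactly those directions, as you note).
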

\begin{proof}
Let $\varphi_1, \varphi_2$ be optimal values of (\ref{eq:eliPolyPSAA}) and (\ref{eq:relax_pro_PSAA}) respectively.
Suppose $u^*$ is the global minimizer of (\ref{eq:eliPolyPSAA}).
Since $[u^*]_{2d_0}$ is feasible for (\ref{eq:relax_pro_PSAA}),
we have
\[ \varphi_1 \,=\, \langle f_N, [u^*]_{2d_0}\rangle +
\varepsilon\|[u^*]_{2d_0}\|\,\geqslant\, \varphi_2. \]
Suppose $y^*$ is the global minimizer of (\ref{eq:relax_pro_PSAA}) such that $\mbox{rank}\, M_{d_0}[y^*] = 1$.
Then $M_{d_0}[y^*] = [u]_{d_0}([u]_{d_0})^T$ for $u = (y^*_{e_1}, \cdots, y^*_{e_{n-1}})^{\top}$. 
It implies that $y^* = [u]_{2d_0}$, and each $g_i(u)\geqslant 0$.
This is because $L_{g_i}^{(d_0)}[y^*]\succeq 0$ and that $g_i(u)$ equals the $(1,1)$th-entry of $L_{g_i}^{(d_0)}[y^*]$.
Then the $u$ is feasible for (\ref{eq:eliPolyPSAA}) and
\[ \varphi_1 \,\leqslant\, f_N(u)+\varepsilon\|[u]_{2d_0}\| 
\,=\, \langle f_N, y^*\rangle+\varepsilon\|y^*\| \,=\, \varphi_2. \]
So $\varphi_1 = \varphi_2$ and (\ref{eq:relax_pro_PSAA}) is a tight relaxation of (\ref{eq:eliPolyPSAA}). 
Furthermore, we have $u=(y_{e_1}^*,\cdots, y_{e_{n-1}}^*)^{\top}$ is the global minimizer of (\ref{eq:eliPolyPSAA}).
	
Assume that $\varphi_1 = \varphi_2$, then $[u^*]_{2d_0}$ is a global minimizer of (\ref{eq:relax_pro_PSAA}).
Note (\ref{eq:relax_pro_PSAA}) has a closed convex feasible set and a strictly convex objective function.
So $y^* = [u^*]_{2d_0}$ is the unique global minimizer of (\ref{eq:relax_pro_PSAA}) and we have 
$M_{d_0}[y^*] = [u^*]_{d_0}([u^*]_{d_0})^{\top},$ which implies $ \mbox{rank} M_{d_0}[y^*] = 1$.
\end{proof}
\begin{corollary}\label{coro:PSAA-unconstrained}
For a given $\varepsilon>0$, suppose (\ref{eq:relax_pr_PSAA_uc}) is solvable with a global minimizer $\hat{y}^*$. 
Then (\ref{eq:relax_pr_PSAA_uc}) is a tight relaxation of (\ref{eq:eliPolyPSAA1}) 
if and only if $\mbox{rank}\, M_{d_0} [\hat{y}^{*}] = 1$. 
Moreover, suppose $\mbox{rank}~ M_{d_0} [\hat{y}^{*}]=1$,
then the $u = (\hat{y}^*_{e_1}, \cdots, \hat{y}^*_{e_{n-1}})^{\top}$ is a global minimizer of (\ref{eq:eliPolyPSAA1}).
\end{corollary}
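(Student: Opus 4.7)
The plan is to adapt the proof of Theorem~\ref{theo:PSAA} to the unconstrained setting. The main simplification is that (\ref{eq:eliPolyPSAA1}) has no inequality constraints, and accordingly the moment relaxation (\ref{eq:relax_pr_PSAA_uc}) only imposes $y_0=1$ and $M_{d_0}[y]\succeq 0$, with no localizing-matrix conditions to verify. Let $\psi_1$ and $\psi_2$ denote the optimal values of (\ref{eq:eliPolyPSAA1}) and (\ref{eq:relax_pr_PSAA_uc}) respectively. The weak-duality inequality $\psi_2\le \psi_1$ is immediate: for any $u^*\in\mathbb{R}^{n-1}$ that is a minimizer of (\ref{eq:eliPolyPSAA1}), the tms $[u^*]_{2d_0}$ satisfies $y_0=1$ and $M_{d_0}[[u^*]_{2d_0}]=[u^*]_{d_0}([u^*]_{d_0})^T\succeq 0$, so it is feasible for (\ref{eq:relax_pr_PSAA_uc}) with the same objective value.

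For the sufficiency direction, assume $\operatorname{rank} M_{d_0}[\hat y^*]=1$. Then I would write $M_{d_0}[\hat y^*]=[u]_{d_0}([u]_{d_0})^T$ with $u=(\hat y^*_{e_1},\ldots,\hat y^*_{e_{n-1}})$ and conclude $\hat y^*=[u]_{2d_0}$ by reading off the entries of $M_{d_0}[\hat y^*]$, since every $\hat y^*_\alpha$ with $|\alpha|\le 2d_0$ appears there. Because (\ref{eq:eliPolyPSAA1}) is unconstrained, $u$ is automatically feasible, so
\[
\psi_1\le f_N(u)+\epsilon\|[u]_{2d_0}\|=\langle f_N,\hat y^*\rangle+\epsilon\|\hat y^*\|=\psi_2.
\]
Combined with weak duality, this yields $\psi_1=\psi_2$ and exhibits $u$ as a global minimizer of (\ref{eq:eliPolyPSAA1}), giving both the tightness claim and the extraction formula.

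For the necessity direction, assume $\psi_1=\psi_2$ and let $u^*$ be a minimizer of (\ref{eq:eliPolyPSAA1}); then $[u^*]_{2d_0}$ is a minimizer of (\ref{eq:relax_pr_PSAA_uc}). Exactly as in the proof of Theorem~\ref{theo:PSAA}, the key point I would invoke is that the objective $\langle f_N,y\rangle+\epsilon\|y\|$ is strictly convex on the feasible set: the normalization $y_0=1$ prevents two distinct feasible points from being non-negative scalar multiples of each other, which is precisely the only obstruction to strict convexity of $\|\cdot\|$. Uniqueness of the minimizer then forces $\hat y^*=[u^*]_{2d_0}$, so $M_{d_0}[\hat y^*]=[u^*]_{d_0}([u^*]_{d_0})^T$ has rank one. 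No real obstacle should arise beyond bookkeeping; the only subtle point, namely strict convexity of the objective on $\{y_0=1\}$, is inherited from the argument used in Theorem~\ref{theo:PSAA}, and the absence of constraints actually streamlines the sufficiency step since no localizing-matrix positivity needs to be invoked to certify feasibility of the extracted point $u$.
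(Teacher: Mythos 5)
Your proposal is correct and follows essentially the same route as the paper, which proves Theorem~\ref{theo:PSAA} and states the corollary as its unconstrained analogue: weak duality via the feasibility of $[u^*]_{2d_0}$, rank-one extraction $\hat{y}^*=[u]_{2d_0}$ for sufficiency, and uniqueness of the relaxation minimizer via strict convexity for necessity, with the localizing-matrix feasibility check simply dropped. If anything, you are slightly more careful than the paper on the one delicate point, noting explicitly that strict convexity of $\langle f_N,y\rangle+\epsilon\|y\|$ holds on the slice $\{y_0=1\}$ because the normalization rules out proportional feasible points, whereas the paper asserts strict convexity without comment.
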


\subsection{A semidefinite algorithm}
We propose a semidefinite algorithm to solve the perturbation sample average approximation of the portfolio selection model.
When the short selling is not allowed, the PSAA model is given as in (\ref{eq:eliPolyPSAA}).
When the short selling is allowed, the PSAA model is given as in (\ref{eq:eliPolyPSAA1}).

\begin{algorithm}\label{alg:PSAA}
For the given sample points $\xi^{(1)}, \ldots, \xi^{(N)}$ and a small parameter $\varepsilon >0$, do the following:
\begin{algorithmic}[1]

\Step \State Compute the sample average function $f_N(\bar{x})$ given as in (\ref{eq:dehSAAf}). 
If short selling is not allowed, go to Step~2. Otherwise, go to Step~3.
		
\Step \State	Solve dual pairs (\ref{eq:relax_pro_PSAA})--(\ref{eq:dual_relax_pro}).
If (\ref{eq:dual_relax_pro}) is infeasible, increase the value of the parameter $\varepsilon $ (i.e., update $\varepsilon = 2\varepsilon$), until (\ref{eq:relax_pro_PSAA}) has a global optimizer $y^{*} $. 
Then go to Step~4.
		
\Step \State Solve dual pairs (\ref{eq:relax_pr_PSAA_uc})--(\ref{eq:relax_du_PSAA_uc}).
If (\ref{eq:relax_du_PSAA_uc}) is infeasible, increase the value of the parameter $\varepsilon $ (i.e., update $\varepsilon = 2\varepsilon$), until (\ref{eq:relax_pr_PSAA_uc}) has a global optimizer $y^{*} $. 
Then go to Step~4.
		
\Step \State Let $u = (y^*_{e_{1}}, y^*_{e_{2}}, \cdots, y^*_{e_{n-1}})^{\top}$.
Output candidate solution $x^*$ of (\ref{eq:def:PPO})
\[ x^* \,=\, (u^T,\, 1- \bar{e}^{\top}u)^{\top}, \]
and terminate the algorithm.
\end{algorithmic}
\end{algorithm}

In Algorithm~\ref{alg:PSAA}, every optimization problem can be solved efficiently by \texttt{MATLAB} software 
\texttt{GloptiPoly3} \cite{HenrionLasserre2009} and \texttt{SeDuMi} \cite{Sturm1999}.
In Step~2, there always exists $\varepsilon>0$ that is big enough such that the optimization pair (\ref{eq:relax_pro_PSAA})--(\ref{eq:dual_relax_pro}) is solvable.
For (\ref{eq:relax_pro_PSAA}), it is strictly feasible since 
$\{\bar{x}\in \mathbb{R}^{n-1}: g(\bar{x})>0\}$ is nonempty.
On the other hand, when $\varepsilon>0$ is big, i.e., 
$\varepsilon>\|vec(f_N-[\bar{x}]_{d_0}^{\top}[\bar{x}]_{d_0})\|$, 
we have (\ref{eq:dual_relax_pro}) also be strictly feasible.
Then there is a strong duality between  (\ref{eq:relax_pro_PSAA})--(\ref{eq:dual_relax_pro}) 
and that each relaxation problem is solvable with a minimizer.
Similar arguments can be applied in Step~3.
When $\varepsilon>0$ is big enough, 
the dual pair  (\ref{eq:relax_pr_PSAA_uc})--(\ref{eq:relax_du_PSAA_uc})
is always solvable.
We refer to \cite[Theorem~3.3]{NieYang2020} for more details.
In Step~4, the output point $x^*$ is feasible for the original PPO.
Suppose the short selling is not allowed.
Since $y^*$ is feasible for (\ref{eq:dual_relax_pro}), 
we have $L_{g_i}^{(d_0)}[y^*]\succeq 0$ for each $i$,
which implies
\[\begin{array}{c}
y_{e_i}^* = \Big(L_{x_i}^{(d_0)}[y^*]\Big)_{1,1}\geqslant 0,\quad\mbox{and}\\
1 - (y_{e_1}^*+\cdots +y_{e_{n-1}}^*) = 
\Big(L_{1-\bar{e}^T\bar{x}}^{(d_0)}[y^*]\Big)_{1,1}\geqslant 0,
\end{array}\]
where $(\cdot)_{1,1}$ refers to the $(1,1)$th entry of the given matrix.
In other words, we have $u\geqslant 0,\, 1-\bar{e}^{\top}u\geqslant 0$, 
so $x^*$ is feasible for (\ref{eq:def:PPO}).
When the short selling is allowed, the $x^*$ is always feasible since 
$\bar{e}^{\top}u+(1-\bar{e}^{\top}u)=1$.

In previous discussions, we have shown that when $\varepsilon>0$ is big enough, 
the moment relaxations (\ref{eq:relax_pro_PSAA}) and (\ref{eq:relax_pr_PSAA_uc}) is solvable with a global optimizer.
However, the optimizer set of the PSAA model may be far away from the optimizer set of the original PPO, if $\varepsilon$ has a big value.
To get a good approximation of the original optimizer set, 
a preferable value of $\varepsilon$ can be $\varepsilon=\varepsilon_1$ such that for every $0\leqslant \varepsilon < \varepsilon_1$, 
the moment relaxations are not solvable.
Such a bound can be determined as follows.
Suppose the short selling is not allowed, 
we solve
\begin{equation}\label{eq:besteps}
\left\{\begin{array}{rl}
\varepsilon_1 = \min & \|vec(q)\|\\
\st &  f_N(\bar{x}) - q(\bar{x}) -\gamma \in \mbox{QM}(g)_{d_0},\\
& q\in \mathbb{R}[\bar{x}]_{2d_0},\, \gamma\in\mathbb{R}.
\end{array}\right.
\end{equation}
When the short selling is allowed, we solve
\begin{equation}\label{eq:e}
\left\{\begin{array}{rl}
\varepsilon_1 = \min & \|vec(q)\|\\
\st &  f_N(\bar{x}) - q(\bar{x}) -\gamma \in \Sigma[\bar{x}]_{2d_0},\\
& q\in \mathbb{R}[\bar{x}]_{2d_0},\, \gamma\in\mathbb{R}.
\end{array}\right.
\end{equation}
These are linear conic optimization problems with linear, second-order cone and semidefinite constraints.
In applications, it is computational expensive to determine $\varepsilon_1$ for each $f_N$.
So we usually select some small heuristic values of $\varepsilon$, i.e., 
$\varepsilon = 0.01$ or $\varepsilon = 0.001$.

\section{Numerical Experiments}
\label{sc:numerical}
In this section, we give some numerical examples to show the efficiency of Algorithm \ref{alg:PSAA}.
The computation is implemented in \texttt{MATLAB} R2014a, in a computer with CPU 6th Generation Intel\textregistered Core\textregistered i5-6500 CPU and RAM 8 GB.
The \texttt{MATLAB} software \texttt{GloptiPoly3} \cite{HenrionLasserre2009} and \texttt{SeDuMi} \cite{Sturm1999} are used to solve PSAA models.
For neatness of the paper, we only display four decimal digits to show computational results.
In each example, for samples $\xi^{(1)},\cdots, \xi^{(N)}$, we denote sample average functions
\begin{equation}
\label{eq:avgr}
r_{i,N}(\bar{x}) \,:=\, \left\{\begin{array}{ll}
\frac{1}{N}\sum\limits_{i=1}^N r\big(\bar{x}, 1-\bar{e}^{\top}\bar{x},\xi^{(i)}\big), & \mbox{if $i = 1$},\\
\frac{1}{N}\sum\limits_{i=1}^N r_i\big(\bar{x},1-\bar{e}^{\top}\bar{x},\xi^{(i)}\big), & \mbox{if $i = 2,\cdots, d$}, 
\end{array}
\right.
\end{equation}
where $r(x,\xi) = x^{\top}\xi$ is the portfolio return and each 
$r_i(x,\xi)$ is given as in (\ref{eq:center_mom}).

\begin{example}
\label{exm:1comp_SAA_PSAA}
Suppose there are three stocks $S_1$, $S_2$ and $S_3$ in the market, 
which allows for short selling.
Let $\xi=(\xi_1, \xi_2, \xi_3)^{\top}$ denote the random return rates of $S_1,S_2,S_3$ respectively.
Assume $\xi_1,\xi_2,\xi_3$ are independent distributed and each follows a normal distribution, i.e.,
\[ \xi_1\sim \mathcal{N}(0.92,1.8),\quad 
\xi_2\sim \mathcal{N}(0.64,1.2),\quad 
\xi_3\sim \mathcal{N}(0.41,1.4). \]
In the above, $\mathcal{N}(\mu,\sigma)$ stands for a normal distribution with the mean value $\mu$ and the standard deviation $\sigma$.
Consider the M-V-S model induced from (\ref{eq:def:PPO})--(\ref{eq:polyloss}) with the order $d = 3$ and the risk preference vector
\[ \lambda \,=\, ( 0.2,\, 0.5,\, 0.3 )^T. \]
We compute the analytic object function of the M-V-S model
\[\begin{array}{l}
f(x) \,=\,  \mathbb{E}[-0.2 r(x,\xi) + 0.5 r_2(x,\xi)-0.3 r_{3}(x,\xi)]\\
\quad \quad \,\,\,=\, -0.184x_1 - 0.128 x_2 - 0.082x_3 + 0.9x_1^2 + 0.6x_2^2 + 0.7x_3^2.\\
\end{array}\]
Since the short selling is allowed, the feasible set is 
\[ X \,=\, \{x\in\mathbb{R}^3: x_1+x_2+x_3 = 1\}, \] 
thus $1-x_1-x_2$ is the investment proportion of stock $S_3$. 
We can equivalently reformulate the PPO into
\begin{equation}\label{eq:e-m-v-s}
\left\{\begin{array}{cl}
\min & 0.618-1.502x_1-1.446x_2+1.6x_1^2+1.4x_1x_2+1.3x_2^2\\
\st & \bar{x} = (x_1,x_2)\in\mathbb{R}^2.
\end{array}\right.
\end{equation}
It can be solved globally with Moment-SOS relaxations.
The optimal value and solution of (\ref{eq:e-m-v-s})  are
\[ f_ {min} = 0.1089,\quad \bar{x}^* = ( 0.2957, 0.3969 )^{\top}. \]
It implies that the best investment proportion is
\[ x_{min} \,=\, \begin{pmatrix}\bar{x}^*\\ 1-\bar{e}^{\top}\bar{x}^*\end{pmatrix} \,=\,
( 0.2957, 0.3969, 0.3074)^{\top}.\]
We make a sampling of $\xi$ with the sample size $N=1000$ and get
\[\begin{array}{l}
r_{1,N}(\bar{x}) \,=\, 0.4216+0.4423x_1+0.2728x_2,\\
r_{2,N}(\bar{x}) \,=\, 1.4253-2.8306x_1-2.7282x_2+3.1689x_1^2+
2.6621x_1x_2+2.4906x_2^2,\\
r_{3,N}(\bar{x}) \,=\, 0.0535-0.0975x_1+0.2576x_2+0.2974x_1^2-0.6222x_1x_2-0.4961x_2^2
\\
\qquad\qquad -0.1243x_1^3-0.2888x_1^2x_2+0.6121x_1x_2^2+0.3540x_2^3.
\end{array}
\]
For the PSAA model (\ref{eq:eliPolyPSAA1}), it has the objective function $f_N(\bar{x})+\varepsilon \Vert[\bar{x}]_{4}\Vert$, with
\begin{equation}\label{eq:m-v-s}
f_N(\bar{x}) \,=\, -0.2r_{1,N}(\bar{x})+0.5r_{2,N}(\bar{x})-
0.3r_{3,N}(\bar{x}).
\end{equation}
Apply Algorithm~\ref{alg:PSAA} to this PPO. 
We report all numerical results in Table~\ref{tab:Results-diff}.
The $\varepsilon$ is the regularization parameter. 
The ``Solvable?'' refers to the status of (\ref{eq:relax_pro_PSAA}).
The ``Time'' stands for the total CPU time of running Algorithm~\ref{alg:PSAA} with the unit second.
The $x^* = (\bar{x}^{\top}, 1-\bar{e}^{\top}u)^{\top}$ denotes the candidate solution of (\ref{eq:def:PPO}) solved from Algorithm~\ref{alg:PSAA}.
The $f_N(u)$ gives an approximation for the optimal value of (\ref{eq:def:PPO}).	
\begin{table}[htb]
\centering
\caption{Numerical results for Example~\ref{exm:1comp_SAA_PSAA}}
\label{tab:Results-diff}
\begin{tabular}{lclll}
\toprule
$\varepsilon$ & Solvable? & Time & $x^* $ & $f_N(u)$\\
\midrule
$0$ & No &  $0.4796$ & not available & not available\\
$0.006$ & Yes & $0.3018$ & $(0.2833,    0.4134,    0.3033)$ &   $0.0995$\\
$0.01$ & Yes &  $0.1037$ & $(0.2833,    0.4124,    0.3043)$ & $0.0995$\\
\bottomrule
\end{tabular}
\end{table}
From Table \ref{tab:Results-diff}, 
we can see that the PSAA model (\ref{eq:m-v-s}) performs better for this problem, compared to the unperturbed case (i.e, $\varepsilon=0$).
It gives reliable optimizers, while the classical SAA model (i.e, $\varepsilon=0$) does not return good ones.
\end{example}

\begin{example}\label{exm:2comp_SAA_PSAA}
Assume that the short selling is not allowed.
Let $S_1, S_2, S_3$ be three stocks, of which the random vector of return each follows a normal distribution. 
We report their expected returns (mean) and risks (covariance) in Table~\ref{tab:Data_Stocks}.
\begin{table}[htb]
\centering
\caption{The expectation and covariance of the random vector in Example~\ref{exm:2comp_SAA_PSAA}}
\begin{tabular}{cccrr}
\toprule 
\multirow{2}*{Stocks name} & \multirow{2}*{Expected return} & \multicolumn{3}{c}{Covariance}   \\
\cmidrule{3-5}
&& $S_1$& $S_2$ & $S_3$\\
\midrule
$S_1$ & $0.91$ & $1.90$ &$0.38$ &$1.20$\\
$S_2$ & $0.65$ &$0.38$ & $1.50$ &$-0.80$  \\
$S_3$ & $0.49$ &$1.20$ & $-0.80$ & $1.70$\\
\bottomrule
\end{tabular}
\label{tab:Data_Stocks}
\end{table}	
Consider the M-V model induced from (\ref{eq:def:PPO})--(\ref{eq:polyloss}) with the order $d = 2$ and the risk preference vector
\[  \lambda \,=\, (0.75,\,0.25)^{\top}. \]
With the given data in Table \ref{tab:Data_Stocks}, we get the analytic object function of the M-V model
\[\begin{array}{l}
f(x) \,=\, \mathbb{E}[-0.75 r(x,\xi) + 0.25 r_2(x,\xi)]\\
 \quad \quad \,\,\,=\, -0.6825x_1-0.4875x_2-0.3675x_3+0.475x_1^2+0.19x_1x_2+0.6x_1x_3\\
\quad\quad\quad\quad +0.375x_2^2-0.4x_2x_3+0.425x_3^2.
\end{array}
\]
By eliminating $x_3$ with the equality constraint $e^{\top}x = 1$, 
the PPO can be equivalently reformulated into
\begin{equation}
\label{eq:m-v}
\left\{
\begin{array}{cl}
\min\limits_{\bar{x}\in\mathbb{R}^2} & 0.0575-0.565x_1-1.37x_2+0.3x_1^2+0.84x_1x_2+1.2x_2^2\\
\mbox{s.t.}& x_{1}\geqslant 0,\, x_{2}\geqslant 0,\, 1-x_1-x_2\geqslant 0.
\end{array}
\right.
\end{equation}
The $\bar{x}=(x_{1}, x_{2})^T $ represents the investment proportion of the stocks $S_1$ and $S_2$, 
thus $1-x_1-x_2$ represents the investment proportion of the stock $S_3$. 
The polynomial optimziation problem (\ref{eq:m-v}) can be solved globally by Moment-SOS relaxations.
The optimal value and solution are
\[ f_ {min} = -0.3455,\quad \bar{x}^* = (0.2794, 0.4730 )^{\top}.\]
It implies that the best investment proportion is 
\[ x_{min} \,=\, \begin{pmatrix}\bar{x}^*\\ 1-\bar{e}^{\top}\bar{x}^*\end{pmatrix} \,=\, (0.2794,\, 0.4730,\, 0.2476 )^{\top}.\]
Then we generate random samples of $\xi$ with different sizes.
The sample averages for expected value and covariance of these samplings are reported in Table~\ref{tab:mean_var_1000}.
\begin{table}[h]
\centering
\caption{The sample mean and covariance of Example \ref{exm:2comp_SAA_PSAA}}\label{tab:mean_var_1000}
\begin{tabular}{cccccc}
\toprule
\multirow{2}*{Sample size}&\multirow{2}*{Stock name}&\multirow{2}*{Sample mean }& \multicolumn{3}{c}{Sample covariance}  \\
\cmidrule{4-6}
&&&$S_1$&$S_2$&$S_3$\\
\midrule 
&$S_1$ &  $0.8650$  & $1.8961$ & $-0.0156$  &  $0.0509$ \\
$N=1000$& $S_2$  & $0.6952$ & $-0.0156$ &   $1.4958$ &   $0.0829$ \\
&$S_3$  & $0.5399$ &  $0.0509$  & $ 0.0829 $ &  $1.7187$\\
\midrule
&$S_1$ &  $1.3768$  & $2.1085$ & $-0.3918$  &  $-0.0580$ \\
$N=100$& $S_2$  &$0.7064$ &$-0.3918$ &   $1.5020$ &  $-0.2511$ \\
&$S_3$  & $0.7678$ & $-0.0580$  & $-0.2511$  &  $1.8118$\\
\bottomrule
\end{tabular}
\end{table}
Apply Algorithm~\ref{alg:PSAA}. 
We report all numerical results in Table \ref{tab:value-diff-varepsilon-1000}.
The $x^*=(\bar{x}^{\top}, 1-\bar{e}^{\top}u)^{\top}$ denotes the candidate solution of (\ref{eq:def:PPO}) solved from Algorithm~\ref{alg:PSAA}.
The $f_N(u)$ gives an approximation for the optimal value of (\ref{eq:def:PPO}).
\begin{table}[h]
\centering
\caption{Numerical results for Example~\ref{exm:2comp_SAA_PSAA}}
\label{tab:value-diff-varepsilon-1000}
\begin{tabular}{cccccc}
\toprule
Sample size & $\varepsilon$ & $x^* $ & $f_N(\bar{x}^*)$ & $f_N(\bar{x}^*)-f_{min}$& Time\\
\midrule
\multirow{3}*{ $N=1000$}& $0$ & $(0.4412, 0.3859, 0.1729)$ & $-0.3933$ & $0.0478$ & $0.2495$\\
&$0.001$& $(0.4410, 0.3857, 0.1734 )$ & $-0.3933$ & $0.0478$ & $0.2234$\\
&$0.01$ & $( 0.4389, 0.3839, 0.1772)$ & $-0.3933$ & $0.0478$ & $0.2523$\\
\midrule
\multirow{3}*{ $N=1000$}& $0$ & $( 0.5811, 0.2531, 0.1657)$ &  $ -0.6520$ &  $0.3065$& $0.2099$ \\
&$0.001$ &  $( 0.5807,  0.2532,  0.1661)$ &   $-0.6520$ &  $0.3065$ & $0.2406$ \\
&$0.01$ &  $( 0.5770, 0.2534, 0.1696)$ &  $-0.6520$ &  $0.3065$ & $0.2649$ \\
\bottomrule
\end{tabular}
\end{table}
From Table~\ref{tab:value-diff-varepsilon-1000},
when short selling is not allowed, 
it is clear that the SAA and PSAA have close solutions when the parameter $\varepsilon$ is small.
It shows that once SAA gives a good approximation of the original PPO, 
then so as PSAA with a proper parameter.
\end{example}

In the following example, we study the real stock data from the Chinese stock market.
A high-order portfolio model is constructed in form of (\ref{eq:def:PPO}).
We apply Algorithm~\ref{alg:PSAA} in this PPO problem to see its performance.

\begin{example}\label{exam3}
We select 4 stocks from the Chinese stock market for portfolio investment, which are Linhai Stock ($S_1$), Shanghai Automotive ($S_2$), 
Hangzhou Iron and Steel ($S_3$), and CYTS ($S_4$).
We collected weekly stock closing prices from March 3, 2006 to March 25, 2021 in the Choice software database. 
For $t= 1, \cdots, N+1$ and for each $i=1, \cdots, 4$, 
we use $P_{i}^{(t)}$ to denote the price of $S_i$ in week $t$.
Then the stock return ~$\xi_{i}^{(t)}$ of $S_i$ in week~$t$~ can be computed as
\[ \xi_{i}^{(t)}=(P_{i}^{(t+1)}-P_{i}^{(t)})/P_{i}^{(t)}. \]
The weekly return rates of these four stocks from March 3, 2006 to 
March 25, 2021 are ploted in the following figures.
The subfigures (a)-(d) shows the weekly return for $S_1$-$S_4$ respectively.
\begin{figure}[h]
\centering
\subfigure[ Linhai Stock]{
\includegraphics [scale=0.35] {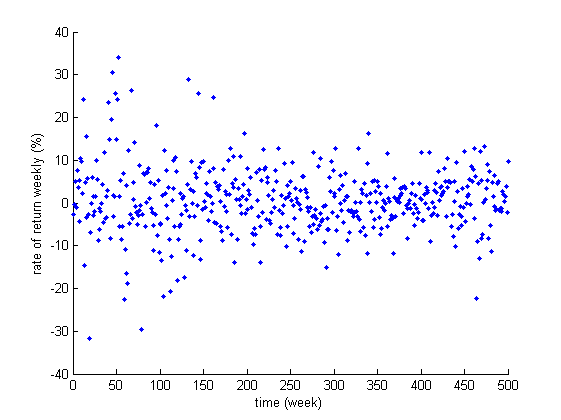} \label{fig:linhai}
}
\hfill
\subfigure[Shanghai Automotive]{
\includegraphics [scale=0.35] {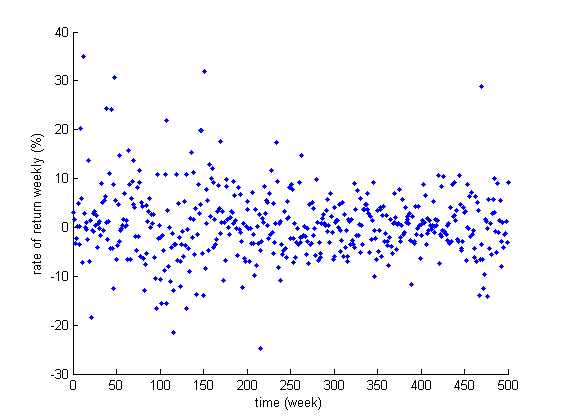} \label{fig:Shanghai}
}
\hfill
\subfigure[Hangzhou Iron and Steel]{
\includegraphics[scale=0.35]{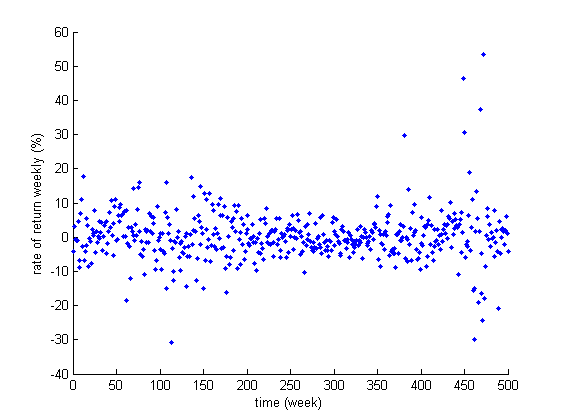}\label{fig:Hangzhou}
}
\hfill
\subfigure[CYTS]{
\includegraphics [scale=0.35]{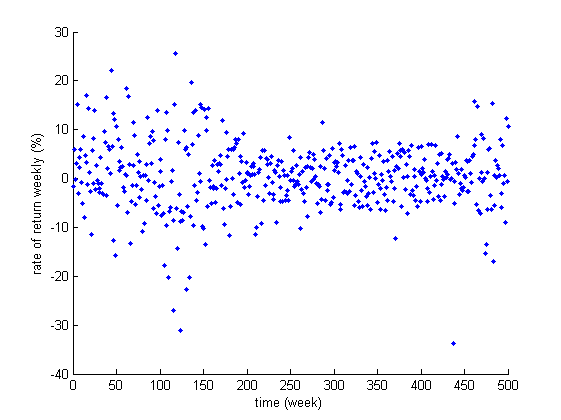}\label{fig:CYTS}
}
\caption{Scatter plot of the weekly return of the four stocks ($N$=500)}
\label{fig:stock}
\end{figure}
It is easy to observe that the distribution of each stock returns is not normally distributed.
They tend to be asymmetric, leptokurtic and heavy-tailed.
For instance, the kurtosis in Figure \ref{fig:stock} is of a sharp peaks and fat tail character.
Therefore, the M-V model, which only uses the first and the second order moments, may not be efficient to give a promising investing plan.
This is because many good properties of M-V model relies on the assumption that the stock returns follow a normal distribution.
On the other hand, higher moments are important in measuring this probability distribution of the stock returns.
For instance, the skewness can be used to measure the skew direction and degree of statistical data distribution. It also represents the asymmetric characteristics of statistical data.
In financial literature, if the skewness is positive, it means that the positive returns are easy to generate.
If the skewness is negative, it means that the potential risk is greater than the potential profit.
Investors prefer the portfolio with a large skewness  and dislike the yield with a large kurtosis.

Based on previous analysis, we consider a PPO model given as in (\ref{eq:def:PPO})--(\ref{eq:polyloss}) with $d=5$.
Assume the short selling is not allowed.

\noindent
(i)\, We choose the risk preference vector as
\[
\lambda \,=\, (0.205,\, 0.213,\, 0.221,\, 0.200,\,  0.161)^{\top}.
\]
With the sample size $N=720$ and the parameter $\varepsilon=0.01$, 
the PSAA model is
\begin{equation}
\label{eq:m-v-s-k-f}
\left\{
\begin{array}{cl}
\min\limits_{\bar{x}\in\mathbb{R}^3} & f_N(\bar{x})+\varepsilon \Vert[\bar{x}]_{6}\Vert\\
\st &  x_1\geqslant 0, x_2\geqslant 0, x_3\geqslant 0,\\
& 1-x_1-x_2-x_3\geqslant 0.
\end{array}
\right.
\end{equation}
In the above, $x_1,x_2,x_3$ denote the investment proportion of the stocks Linhai Stock, Shanghai Automotive, Hangzhou Iron and Steel respectively.
Then $1-x_1-x_2-x_3$ will be the investment proportion of the stock CYTS.
Apply Algorithm~\ref{alg:PSAA}. 
We get the candidate solution for the original PPO, 
and an approximation for its optimal value:
\[
x^* \,=\, (0.2517,\, 0.2560,\, 0.2447,\, 0.2436)^{\top},
\quad f_N(\bar{x}^*) \,=\, 0.0110.
\]

\noindent
(ii)\, Next, we consider different parameter vector $\lambda=(\lambda_1, \lambda_2, \lambda_3, \lambda_4 , \lambda_5)^{\top}$ to compare numerical results of  different investors' preferences.
In financial literature, the larger $\lambda_i~(i=2,4)$ imply investors are the more risk-averse.
Conversely, the smaller $ \lambda_i~(i=2,4)$ are, 
the more risk-appetite investors are.
In particular, when $\lambda_2+\lambda_4=1$, investors are extremely risk-averse and their only goal is to minimize the risk of their portfolio.
When $\lambda_2+\lambda_4=0$, investors are extremely fond of risk, 
and portfolio returns are the only factors that affect decision making.
In our numerical experiments, we take three scenarios as follows.
\[
\begin{array}{ll}
\mbox{Scenario 1:}& \lambda=(0.2070,  0.2060 , 0.2020 , 0.2050 , 0.1800)^{\top}.\\
\mbox{Scenario 2:}& \lambda=(0.0005 ,  0.8300 , 0.0005 , 0.1385  , 0.0205)^{\top}.\\
\mbox{Scenario 3:}& \lambda=(0.5270 , 0.0305, 0.2020 , 0.0305 , 0.2100)^{\top}.\\
\end{array}
\]
The scenario 1, scenario 2, scenario 3 respectively reflect that the investor is risk neutral, risk aversion, and risk appetite.
Apply Algorithm~\ref{alg:PSAA} in these three cases. 
We report all numerical results in Table \ref{tab:MVSK_diff_lambda}.
The $x^* = (\bar{x}^{\top},1-\bar{e}^{\top}\bar{x}^*)^{\top}$ denotes the candidate solution of (\ref{eq:def:PPO}) solved from Algorithm~\ref{alg:PSAA}.
The $f_N(\bar{x}^*)$ gives an approximation for the optimal value of (\ref{eq:def:PPO}).
\begin{table}[h]
\centering
\caption{Numerical results for Example~\ref{exam3}}
\label{tab:MVSK_diff_lambda}
\begin{tabular}{ccc}
\toprule
Scenario & $x^*$ &   $f_N(\bar{x}^*)$\\
\midrule
1 & $(0.2517, 0.2600, 0.2447, 0.2436)$ &  $0.0110$ \\
2 & $(0.2522, 0.2527, 0.2393, 0.2559)$ & $0.0127$   \\
3 & $(0.2526, 0.2742,0.2440, 0.2293)$ &  $0.0095$  \\
\bottomrule
\end{tabular}
\end{table}

To better explain that the different risk attitude leads to different investing plan, 
we selected the weekly return rates of the four stocks in the first 20 weeks. 
The volatility of the return rate can be observed from Figure \ref{fig:stock1}.
\begin{figure}[h]
\centering
\includegraphics [scale=0.30] {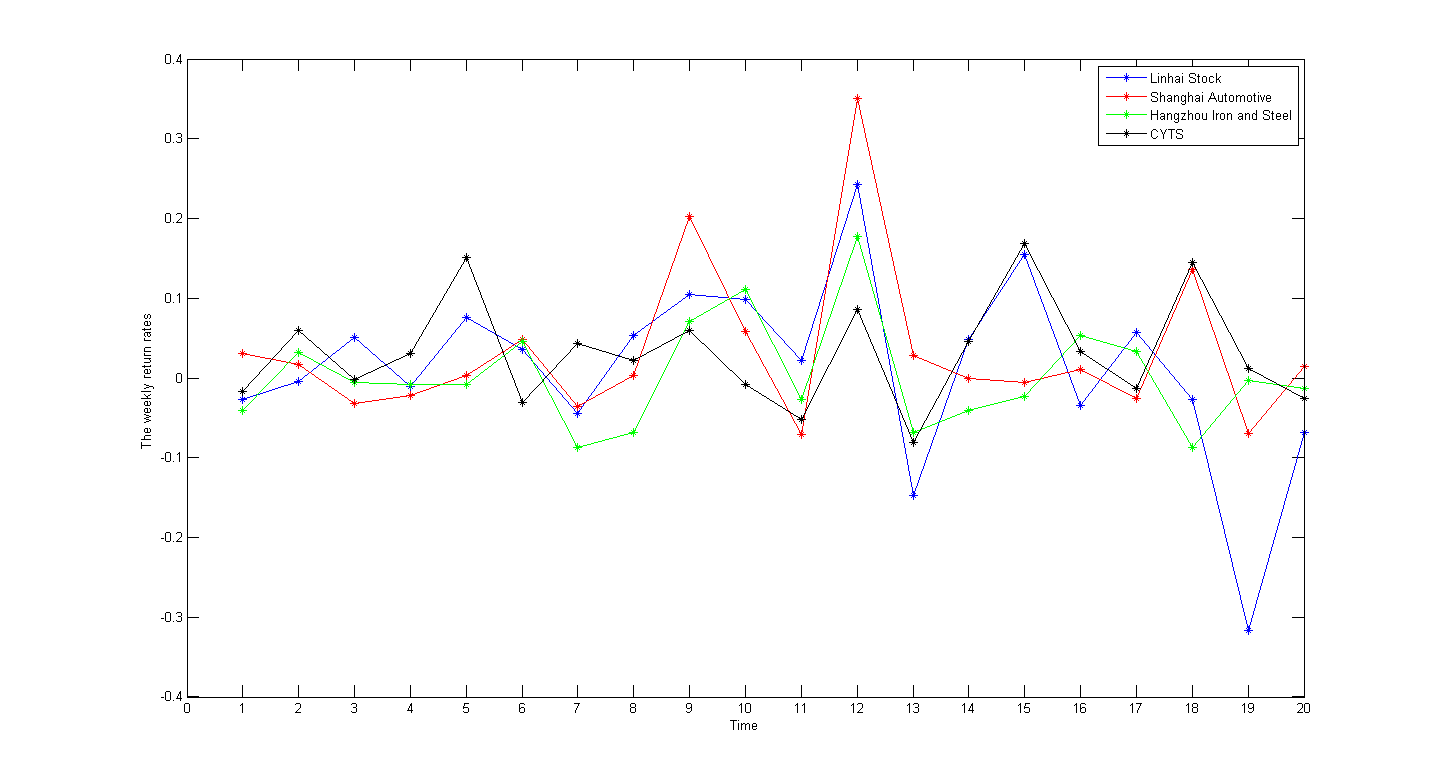}
\caption{The weekly return rates of the four stocks}
\label{fig:stock1}
\end{figure}
From Figure \ref{fig:stock1} and Table \ref{tab:MVSK_diff_lambda}, 
we can see that risk appetite investors invest more in Shanghai Automotive
 stock than risk neutral investors and risk averse investors.
This is because the return rate of Shanghai Automotive stock is more volatile than the others.
Risk averse investors invest more in CYTS stock than risk neutral investors and risk appetite investors, because the return rate of CYTS stock is stable and the risk is low.
\end{example}

\section{Conclusions}\label{sec:con}
In this paper, we study a portfolio selection model with high-order moments.
A polynomial portfolio optimization model is proposed, 
which can overcome the deficiencies of the M-V model and cater to investors with different risk appetites.
To solve the PPO, we introduce perturbation sample average approximations. It gives a robust approximation of PPO and can be efficiently solved by Moment-SOS relaxations.
We summarize a semidefinite algorithm for solving PSAA globally. 
The algorithm can be used to find reliable approximations of optimal value and optimizer set of the original PPO.
Numerical experiments are given to show the efficiency of our methods.

\end{document}